\newtheorem{theorem}{Theorem}[section]
\newtheorem{lemma}[theorem]{Lemma}
\newtheorem{proposition}[theorem]{Proposition}
\newtheorem{example}[theorem]{Example}
\begin{document}

\title[Normal covers of diagonal type]{Groups having minimal covering number 2 of diagonal type}

\author[M. Fusari]{Marco Fusari}
\address{Marco Fusari, Dipartimento di Matematica ``Felice Casorati", University of Pavia, Via Ferrata 5, 27100 Pavia, Italy}
\email{lucamarcofusari@gmail.com}

\author[A.~Previtali]{Andrea Previtali}
\address{Dipartimento di Matematica e Applicazioni, University of Milano-Bicocca, Via Cozzi 55, 20125 Milano, Italy}
\email{andrea.previtali@unimib.it}

\author[P.~Spiga]{Pablo Spiga}
\address{Dipartimento di Matematica e Applicazioni, University of Milano-Bicocca, Via Cozzi 55, 20125 Milano, Italy}
\email{pablo.spiga@unimib.it}

\begin{abstract}
Garonzi and Lucchini~\cite{GL} explored finite groups $G$ possessing a normal $2$-covering, where no proper quotient of $G$ exhibits such a covering. Their investigation offered a comprehensive overview of these groups, delineating that such groups fall into distinct categories: almost simple, affine, product action, or diagonal.

In this paper, we focus on the family falling under the diagonal type. Specifically, we present a thorough classification of finite diagonal groups  possessing a normal $2$-covering, with the attribute that no proper quotient of $G$ has such a  covering.

\keywords{diagonal group, normal covering}
\textit{With deep appreciation to Martino Garonzi and Andrea Lucchini, for keeping us entertained.}

\end{abstract}

\subjclass[2010]{Primary 05C35; Secondary 05C69, 20B05}

\maketitle
\section{Introduction}\label{sec:intro}

Let $G$ be a finite group. A \textit{\textbf{normal $k$-covering}} of $G$ is a family of $k$ proper subgroups $H_1,\ldots,H_k$ of $G$ such that each element of $G$ is conjugate to an element of $H_i$ for some $i$, that is, $$G=\bigcup_{i=1}^k\bigcup_{g\in G}H_i^g.$$ The minimal $k\in\mathbb{N}$ such that $G$ admits a normal $k$-covering is said to be the \textit{\textbf{normal covering number}} of $G$. Jordan's theorem ensures that $G$ cannot be covered by the conjugates of a proper subgroup and hence $\gamma(G)\ge 2$.

The study of finite groups admitting a normal $2$-covering has attracted considerable attention and we refer to the introductory chapter of the monograph~\cite{1} for details. For inductive purposes, it is interesting to classify finite groups $G$ with $\gamma(G)=2$ such that $\gamma(G/N)>2$, for every non-identity normal subgroup of $G$. In this paper, a group with this property is called \textit{\textbf{basic}}. Broadly speaking,  basic groups are the basic objects having normal covering number $2$, because any other such group is an extension of these building blocks.

Basic groups with a normal covering number of $2$ are explored by Garonzi and Lucchini in~\cite{GL}, revealing that such groups manifest as either almost simple, affine, product action, or diagonal. While we provide a concise overview of these classifications here, a comprehensive understanding is available in~\cite{GL}, where the terminology draws inspiration from the O'Nan-Scott classification of primitive groups. Indeed, let $G$ be a basic group with $\gamma(G)=2$ and let $H $ and $K$ be proper subgroups of $G$ witnessing that $\gamma(G)=2$. Replacing $H$ and $K$ if necessary, we may suppose that $H$ and $K$ are maximal subgroups of $G$. Garonzi and Lucchini show that $G$ has a unique minimal normal subgroup, $N$ say. Moreover, they show that either $G$ is almost simple or, replacing $H$ with $K$ if necessary, $N\le H$. Almost simple groups admitting a normal $2$-covering where neither $H$ nor $K$ contains the socle of $G$ are completely determined in the monograph~\cite{1}. Therefore, here we suppose $N\le H$.
Since $\gamma(G/N)>2$, we deduce $N\nleq K$ and hence $K$ is a maximal core-free subgroup of $G$. In particular, the permutation action of $G$ on the cosets of $K$ gives rise to a faithful representation of $G$ as a primitive permutation group. Under this permutation representation, $K$ can be viewed as a point stabilizer. Using the terminology from the O'Nan-Scott classification of primitive groups, Garonzi and Lucchini show that $G$ is either almost simple, affine, product action or diagonal. In particular, in the context of basic normal $2$-coverings these terms are referred to the abstract structure of $G$ and to the embedding of $K$ in $G$.

The scope of this paper is to give a complete classification of the basic groups falling into the diagonal family. We refer to~\cite{0} for an impressive source of information on the algebraic, combinatorial and geometric structure of groups of diagonal type. In Section~\ref{sec:diagonal type}, we have included a definition of this class of groups and a number of basic facts, which are sufficient for the purpose of this paper.

\begin{example}\label{example}{\rm
Let $T$ be a non-abelian simple group, let $U$ be a subgroup of $\mathrm{Aut}(T)$ containing the inner automorphisms of $T$ and let $p$ be a prime number with $\gcd(|U|,p)=1$. Here we identify $T$ with the inner automorphism group of $T$. Let $N=T^p$, let 
$$H=\{(x_1,\ldots,x_p)\in U^p\mid x_i\equiv x_j\pmod T, \forall i,j\in \{1,\ldots,p\}\}$$ and let $\sigma=(1\,2\,\cdots\,p)$ be the cyclic permutation of degree $p$. 

We define 
$$G=H\rtimes \langle \sigma\rangle,$$
where $\sigma$ acts as a group of automorphisms on $H$ by setting
$$(x_1,x_2,\ldots,x_p)^\sigma=(x_2,x_3,\ldots,x_p,x_1),$$
for each $(x_1,\ldots,x_\ell)\in H$.

Finally we let $$K=\{(x,\ldots,x)\in H\mid x\in U\}\times \langle\sigma\rangle\le G.$$ Observe that, as $\gcd(|U|,p)=1$, a Sylow $p$-subgroup of $G$ has order $p$ and hence $\langle\sigma\rangle$ is a Sylow $p$-subgroup of $G$. Moreover, ${\bf C}_G(\sigma)=K$.

We claim that $H$ and $K$ are the components of a normal $2$-covering of $G$. Indeed, let $g\in G$. Therefore, $g=(x_1,\ldots,x_p)\sigma^i$, for some $(x_1,\ldots,x_p)\in H$ and $i\in \{0,\ldots,p-1\}$. If $i=0$, then $g\in H$. Assume then $i\ne 0$.  As $i\ne 0$, $g$ has order divisible by $p$ and hence $g$ centralizes a Sylow $p$-subgroup $P$ of $G$. By Sylow's theorem, there exists $z\in G$ with $P^z=\langle \sigma\rangle$. Thus $g^z$ centralizes $P^z=\langle \sigma\rangle$ and hence $g^z\in {\bf C}_G(\sigma)=K$.}
\end{example}

We are now ready to state our main result.
\begin{theorem}\label{thrm:1}
Let $G$ be a group with $\gamma(G)=2$ and $\gamma(G/N)>2$, for every non-identity normal subgroup $N$ of $G$. Let $H$ and $K$ be maximal subgroups of $G$ witnessing that $\gamma(G)=2$; moreover, in view of~\cite{GL}, let $H$ be the component containing the socle of $G$. If $G$ is of diagonal type, then $G,K$ and $H$ are isomorphic to one of the groups described in Example~$\ref{example}$.
\end{theorem}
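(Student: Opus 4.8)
The plan is to start from the description of diagonal‑type groups recalled in Section~\ref{sec:diagonal type}. So $N:=\mathrm{Soc}(G)=T^{k}$ for a non‑abelian simple $T$ and some $k\ge 2$; one has $G\le W:=\{(a_{1},\dots,a_{k})\in\mathrm{Aut}(T)^{k}:a_{i}\equiv a_{j}\pmod{\mathrm{Inn}(T)}\}\rtimes S_{k}$ with $N=\mathrm{Inn}(T)^{k}$; the core‑free maximal subgroup $K$ is a point stabilizer whose intersection with $N$ is a full diagonal subgroup $\Delta\cong T$; and, since $N\le G\le W$ and $W$ is the product of its base group with such a stabilizer, one gets $G=NK$. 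Let $\rho\colon G\to S_{k}$ be the action on the $k$ simple factors, $P:=\rho(G)$ (a transitive — in fact primitive — subgroup of $S_{k}$) and $G_{0}:=\ker\rho$, so $N\le G_{0}\trianglelefteq G$, $G/G_{0}\cong P$, and $G_{0}/N$ embeds into $\mathrm{Out}(T)$. Because $G=NK$, the conjugates of $K$ are exactly the $K^{n}$ with $n\in N$, and a direct computation inside $W$ yields a \emph{cycle criterion}: writing $g=(c_{1},\dots,c_{k})\tilde\tau$ with $\tau=\rho(g)$ and all $c_{i}$ in one coset $c\,\mathrm{Inn}(T)$, the element $g$ is conjugate into $K$ if and only if there is $a\in\mathrm{Aut}(T)$ with $aN=cN$ such that for every cycle $(i_{1}\,\cdots\,i_{m})$ of $\tau$ the product $c_{i_{1}}c_{i_{2}}\cdots c_{i_{m}}$ is $\mathrm{Inn}(T)$‑conjugate to $a^{m}$.

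Next I would pin down $H$. Since $N\le H$, an element of $G$ is conjugate into $H$ precisely when its image in $\bar G:=G/N$ is conjugate into $\bar H:=H/N$. Applying the cycle criterion with $\tau=1$ and using that a non‑abelian simple group has no fixed‑point‑free automorphism (so each coset of $\mathrm{Inn}(T)$ in $\mathrm{Aut}(T)$ meets at least two $T$‑conjugacy classes), one finds, over every coset, elements of $G_{0}$ not conjugate into $K$; by the $2$‑covering these are conjugate into $H$, so $\bar G_{0}:=G_{0}/N$ lies in the union of the $\bar G$‑conjugates of $\bar H$. As $\bar H$ is maximal, the normal subgroup $\bar G_{0}$ of $\bar G$ acts on the primitive $\bar G$‑set $\bar G/\bar H$ with every element fixing a point, so by Jordan's theorem it acts trivially; that is, $G_{0}\le H$. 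Hence $R:=H/G_{0}$ is a maximal subgroup of $P$, an element $g\in G$ is conjugate into $H$ iff $\rho(g)$ is conjugate into $R$, and the covering hypothesis becomes: for every $\tau\in P$ not conjugate into $R$, \emph{all} $g\in G$ projecting to $\tau$ satisfy the cycle criterion. Feeding this back into the cycle criterion (if $\tau$ has a repeated cycle length, a fixed point together with a further cycle, or two cycles of different lengths $>1$, one can choose the $c_{i}$ so that no common $a$ works) forces every such $\tau$ to be a single $k$‑cycle; and there is at least one such $\tau$, by Jordan's theorem applied to $R<P$. So $P$ contains a $k$‑cycle.

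The permutation‑group step that follows is the main obstacle. Since $P$ is primitive of degree $k$ and contains a regular cyclic subgroup, the classification of such groups (Burnside, Schur) gives that $P$ is either $2$‑transitive or $k$ is prime with $P\le\mathrm{AGL}(1,k)$; moreover every non‑identity element of a regular cyclic subgroup is fixed‑point‑free, hence (by the previous step, together with the remark that a power of a $k$‑cycle with a nontrivial common factor has repeated cycle lengths) one is forced to $k=p$ prime in every case. Now the \emph{basic} hypothesis is decisive: $G_{0}\ne 1$ is normal in $G$, so $\gamma(P)=\gamma(G/G_{0})>2$; but if $P\ne C_{p}$, then $P$ equals the union of the $P$‑conjugates of $R$ with the $P$‑conjugates of a Sylow $p$‑subgroup — which is cyclic of order $p$ (as $p$ exactly divides $|S_{p}|$) and whose non‑identity elements are precisely the $p$‑cycles — so $\gamma(P)\le 2$, a contradiction. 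Therefore $P=\langle\sigma\rangle\cong C_{p}$ acts regularly, $k=p$, $R=1$, and $H=G_{0}$. (This is also where the classification of $2$‑transitive groups with a regular cyclic subgroup gets used to rule out the remaining composite‑degree possibilities, either directly via the cycle criterion or again via the ``$\gamma(P)\le 2$'' bound.)

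Finally I would match the configuration with Example~\ref{example}. With $P=C_{p}$ and $R=1$ the covering hypothesis says every $g\in G$ with nontrivial image in $P$ is conjugate into $K$; applying the cycle criterion to the $p$‑cycle $\sigma$ forces, for each relevant coset, every element of $c^{p}\mathrm{Inn}(T)$ to be $\mathrm{Inn}(T)$‑conjugate to a $p$‑th power $a^{p}$ with $aN=cN$, and a short argument shows this fails — producing an element of $G$ in no conjugate of $H$ or of $K$ — unless $\gcd(p,|U|)=1$, where $U\le\mathrm{Aut}(T)$ is the preimage of $O:=$ the image of $G/N$ in $\mathrm{Out}(T)$; the same analysis forces $G/N\cong O\times C_{p}$. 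Then $p\nmid|H|$ (where $|H|=|G_{0}|=|T|^{p}|O|$), so Schur--Zassenhaus gives $G=H\rtimes\langle\sigma\rangle$ with $\langle\sigma\rangle$ a Sylow $p$‑subgroup; the image of $\sigma$ in $\mathrm{Out}(T)$ is trivial (its order divides $\gcd(p,|O|)=1$), so after conjugating by a suitable element of $N$ one may take $\sigma=(1\,2\,\cdots\,p)$ acting as the coordinate shift, which identifies $H$ with $\{(x_{1},\dots,x_{p})\in U^{p}:x_{i}\equiv x_{j}\pmod{\mathrm{Inn}(T)}\}$ and $K$ with $\mathbf{C}_{G}(\sigma)=\{(x,\dots,x):x\in U\}\times\langle\sigma\rangle$ — exactly the groups of Example~\ref{example}. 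Conversely those groups are basic with $\gamma=2$: the covering is exhibited in the Example, and for every $1\ne M\trianglelefteq G$ the quotient $G/M$ is a quotient of the abelian group $G/N\cong O\times C_{p}$, whose covering number is at least $3$. This completes the classification.
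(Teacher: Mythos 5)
Your overall route coincides with the paper's: the same reduction $\mathrm{Ker}(\pi_2)\cap G\le H$ via the absence of fixed-point-free automorphisms plus Jordan's theorem, the same cycle-by-cycle computation showing that elements not conjugate into $H$ project to single $(\ell+1)$-cycles, and the same use of the classification of primitive groups with a regular cyclic subgroup together with the basic hypothesis ($\gamma(P)>2$) to force $P$ cyclic of prime order $p$. Up to that point the proposal is sound, modulo one muddled intermediate claim: the observation that powers of a $k$-cycle have repeated cycle lengths does not by itself force $k$ prime (such powers are merely forced to be conjugate into $R$, which is no contradiction); primality only comes out at the end, from $P=C$ cyclic and primitive, exactly as in the paper.

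The genuine gap is in the last step. You assert that ``a short argument'' shows the covering fails unless $\gcd(p,|U|)=1$ and that $K=\mathbf{C}_G(\sigma)=U\times\langle\sigma\rangle$. This is precisely the part of the paper's proof that is neither short nor formal. Two separate arguments are needed: first, that $p\nmid|T|$ (the paper takes a central element $z$ of order $p$ in a Sylow $p$-subgroup and shows it violates the bijectivity of $y\mapsto(y\phi)^{p}\phi^{-p}$ from its Lemma~\ref{l:13}); second, and harder, that the $\mathrm{Aut}(T)$-component $\phi$ of a $p$-power-order element $\phi\sigma\in K$ is trivial. For the latter the paper shows the twisted norm map $y\mapsto yy^{\phi^{-1}}\cdots y^{\phi^{-\ell}}$ is bijective, deduces $\mathbf{C}_T(\phi^{p})=\mathbf{C}_T(\phi)$, and then invokes Lemma~\ref{l:new}, which rests on the classification-dependent fact that a $p$-element of $\mathrm{Aut}(T)$ with $p\nmid|T|$ is a field automorphism of a group of Lie type, for which $\mathbf{C}_T(\phi)<\mathbf{C}_T(\phi^{p})$. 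Without supplying something equivalent, your identification of $K$ with $U\times\langle\sigma\rangle$ is unproved. Separately, your closing ``converse'' is false and should be deleted: $G/N\cong O\times C_p$ need not be abelian ($O\le\mathrm{Out}(T)$ can contain $\mathrm{Sym}(3)$), and the paper explicitly notes that for $U\cong\mathrm{P}\Omega_8^+(q).\mathrm{Sym}(3)$ the group of Example~\ref{example} is \emph{not} basic. This does not affect the theorem itself, which only claims one implication, but it contradicts the paper's own remark following Theorem~\ref{thrm:1}.
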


Not every group mentioned in Example~\ref{example} qualifies as basic. For instance if $U\cong\mathrm{P}\Omega_8^+(q).\mathrm{Sym}(3)$, then $G$ is not basic, because $\gamma(\mathrm{Sym}(3))=2$ and $\mathrm{Sym}(3)$ is an epimorphic image of $G$.
The basic nature of $G$ hinges on the structure of $G/N = U/T \times \langle\sigma\rangle$, and consequently, on the outer automorphism group of $T$. While we refrain from delving into every intricate detail needed for an ``if and only if'' scenario in Theorem~\ref{thrm:1}, it is worth noting that the outer automorphism group of a non-abelian simple group is reasonably straightforward, thereby rendering the structure of $U/T\times\langle\sigma\rangle$ equally uncomplicated.

\section{Groups of diagonal type}\label{sec:diagonal type}
We start by recalling the structure of the finite primitive groups of diagonal type. This will also allow us to set up the
notation for this section and for the proof of Theorem~\ref{thrm:1}.

Let $\ell \ge 1$ and let $T$ be a non-abelian simple group. Consider the group $N = T^{\ell+1}$ and $D = \{(t, \ldots , t) \in N \mid t \in T \}$, a
diagonal subgroup of $N$. Set $\Omega = N/D$, the set of right cosets of $D$ in $N$. Then $|\Omega| = |T |^\ell$. Moreover we may identify
each element $\omega \in \Omega$ with an element of $T^\ell$ as follows: the right coset $\omega = D(\alpha_0 , \alpha_1 , \ldots , \alpha_\ell )$ contains a unique element whose first coordinate is $1$, namely, the element $(1, \alpha_0^{-1}\alpha_1,\ldots, \alpha_0^{-1}\alpha_\ell )$. We choose this distinguished coset representative.
Now the element $\phi$ of $\mathrm{Aut}(T )$ acts on $\Omega$ by
$$D(1, \alpha_1, \ldots, \alpha_\ell )^\phi = D(1, \alpha_1^\phi,\ldots,\alpha_\ell^\phi).$$
Note that this action is well-defined because $D$ is $\mathrm{Aut}(T )$-invariant. Next, the element $(t_0 ,\ldots, t_\ell )$ of $N$ acts on $\Omega$ by
$$D(1,\alpha_1, \ldots,\alpha_\ell )^{(t_0,\ldots,t_\ell)} = D(t_0 , \alpha_1 t_1 ,\ldots, \alpha_\ell t_\ell ) = 
D(1, t_0^{-1}\alpha_1 t_1 , \ldots, t_0^{-1} \alpha_\ell t_\ell ).$$
Observe that the action induced by $(t, \ldots , t) \in N$ on $\Omega$ is the same as the action induced by the inner automorphism
corresponding to the conjugation by $t$. Indeed,
\begin{align*}
D(1,\alpha_1,\ldots,\alpha_\ell)^{(t,\ldots,t)}&=D(t,\alpha_1t,\ldots,\alpha_\ell t)=D(t^{-1},t^{-1},\ldots,t^{-1})(t,\alpha_1t,\ldots,\alpha_\ell t)\\
&=D(1,\alpha_1^{t},\ldots,\alpha_\ell^t)=D(1,\alpha_1,\ldots,\alpha_\ell)^{\phi_t},
\end{align*}
where we are denoting by $\phi_t:T\to T$ the inner automorphism determined by $t$.
Finally, the element $\sigma$ in $\mathrm{Sym}(\{0,\ldots, \ell\})$ acts on $\Omega$ simply by permuting the
coordinates. Note that this action is well-defined because $D$ is $\mathrm{Sym}(\ell + 1)$-invariant.

The set of all permutations we described generates a group $W$ isomorphic to
$$T^{\ell+1} \cdot (\mathrm{Out}(T ) \times \mathrm{Sym}(\ell + 1)).$$
In particular, each element $g$ of $W$ can be written in the form $g=n\phi\sigma$, for some $n\in N$, $\phi\in\mathrm{Aut}(T)$ and $\sigma\in\mathrm{Sym}(\ell+1)$. Now, $n=(t_0,t_1,\ldots,t_\ell)$, where $t_0,\ldots,t_\ell\in T$. Since $N\cap\mathrm{Aut}(T)=D$, without loss of generality, we may choose $n\in N$ with $t_0=1$. In particular, each element $g$ of $W$ can be written uniquely in the form $g=n\phi\sigma$, for some $n=(t_0,t_1,\ldots,t_\ell)\in N$, $\phi\in\mathrm{Aut}(T)$ and $\sigma\in\mathrm{Sym}(\ell+1)$, where the first coordinate $t_0$ of $n$ equals $1$.

A subgroup $G$ of $W$ containing the socle $N$ of $W$ is primitive if either $\ell = 1$ or $G$ acts primitively by conjugation on the
$\ell + 1$ simple direct factors of $N$, see~\cite[Theorem 4.5A]{6}. The group $G$ is said to be primitive of diagonal type, when the second
case occurs, that is, $N \unlhd G \le W$ and $G$ acts primitively by conjugation on the $\ell + 1$ simple direct factors of $N$.

We set $\omega_o=D\in\Omega$. In particular, 
\begin{align*}
W_{\omega_o}&=\mathrm{Aut}(T)\times\mathrm{Sym}(\ell+1),\\
N_{\omega_o}&=\mathrm{Inn}(T)=T,
\end{align*}
where we are denoting by $\mathrm{Inn}(T)$ the set of inner automorphisms of $T$ and we are identifying $\mathrm{Inn}(T)$ with $T$ itself,
and $$\frac{W}{N}\cong\frac{W_{\omega_o}}{N_{\omega_o}}\cong \frac{\mathrm{Aut}(T)}{\mathrm{Inn}(T)}\times\mathrm{Sym}(\ell+1)=\mathrm{Out}(T)\times\mathrm{Sym}(\ell+1).$$
We let $$\pi_1:W\to \mathrm{Out}(T)\hbox{ and }\pi_2:W\to\mathrm{Sym}(\ell+1)$$
be the natural projections of $W$ onto the first and the second direct factors of $W/N$. Therefore,
\begin{align*}
\mathrm{Ker}(\pi_1)&=N\rtimes\mathrm{Sym}(\ell+1)\,\hbox{ and }\,\mathrm{Ker}(\pi_2)=N.\mathrm{Out}(T).
\end{align*}

\section{Proof of Theorem~$\ref{thrm:1}$}\label{sec:proof}
We need three preliminary facts.
\begin{proposition}\label{p:1}
Let $L$ be a finite primitive group of degree $n$ containing a cyclic regular subgroup $C$. Then every element of $L$ which is a cycle of length $n$ is $L$-conjugate to an element of $C$.
\end{proposition}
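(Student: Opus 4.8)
The plan is to deduce the statement from the classification of the finite primitive permutation groups containing a cyclic regular subgroup. The first remark is that an element $g$ of $L$ which is an $n$-cycle generates a cyclic regular subgroup $\langle g\rangle$ of $L$ of order $n$, and that $g$ is $L$-conjugate to an element of $C$ if and only if $\langle g\rangle$ is $L$-conjugate to $C$, since the conjugate image of $g$ has order $n$ and hence generates $C$. So the proposition is equivalent to the assertion that any two cyclic regular subgroups of $L$ are $L$-conjugate, and it suffices to verify this for each of the groups in the classification (Burnside, Schur, Feit; see also Jones): up to permutation isomorphism, $L$ is one of (i) a group with $\mathrm{C}_p\unlhd L\le\mathrm{AGL}_1(p)$ of prime degree $p$; (ii) $\mathrm{Sym}(n)$, or $\mathrm{Alt}(n)$ with $n$ odd; (iii) a group with $\mathrm{PSL}_d(q)\unlhd L\le\mathrm{P}\Gamma\mathrm{L}_d(q)$ in its action on the $n=(q^d-1)/(q-1)$ points of $\mathrm{PG}(d-1,q)$; or (iv) $\mathrm{PSL}_2(11)$ or $M_{11}$ in degree $11$, or $M_{23}$ in degree $23$.

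Most of these cases are quick. When $n=p$ is prime, which takes care of family (i) and of all three groups in (iv), no classification is even needed: $L$ is transitive of degree $p$, so $p$ divides $|L|$, while $p^2$ does not since $|L|$ divides $p!$; hence $C$ is a Sylow $p$-subgroup of $L$, and as every $p$-cycle generates a subgroup of order $p$, which is a Sylow $p$-subgroup, it is $L$-conjugate to a generator of $C$ by Sylow's theorem. For $L=\mathrm{Sym}(n)$ all $n$-cycles form a single conjugacy class, which meets $C$, so there is nothing to prove. For family (iii), $C$ is the image in $L$ of a Singer cycle of $\mathrm{GL}_d(q)$; here one uses that all Singer cycles of $\mathrm{GL}_d(q)$ are conjugate and that a Singer cycle surjects, via the determinant, onto $\mathrm{PGL}_d(q)/\mathrm{PSL}_d(q)$, so that the projective Singer subgroups are already conjugate under $\mathrm{PSL}_d(q)\le L$; since, by the classification, the cyclic regular subgroups of $L$ are precisely these projective Singer subgroups, the claim follows.

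The genuinely delicate case is $L=\mathrm{Alt}(n)$ with $n$ odd: an $n$-cycle $g$ is $\mathrm{Sym}(n)$-conjugate, but possibly not $\mathrm{Alt}(n)$-conjugate, to a generator of $C$, because the $\mathrm{Sym}(n)$-class of an $n$-cycle may split into two $\mathrm{Alt}(n)$-classes. The plan is to analyse this splitting inside $N_{\mathrm{Sym}(n)}(C)$, which is the holomorph $\mathbb{Z}_n\rtimes\mathrm{Aut}(\mathbb{Z}_n)$ of $\mathbb{Z}_n$: one determines the image of this holomorph under the sign homomorphism $\mathrm{Sym}(n)\to\{\pm1\}$, equivalently which multiplication maps $x\mapsto ax$ on $\mathbb{Z}_n$ are even permutations of the $n$ points, and then checks that the resulting $N_{\mathrm{Alt}(n)}(C)$-orbits on the generators of $C$ meet every $\mathrm{Alt}(n)$-class of $n$-cycles. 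Carrying out this parity bookkeeping, and confirming that it covers all of the relevant $n$, is the main obstacle; the remaining cases reduce, as explained above, to Sylow's theorem and to the conjugacy of Singer cycles.
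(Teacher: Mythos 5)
You take the same route as the paper --- reduce to the conjugacy of cyclic regular subgroups and go through the Burnside--Schur--Feit--Jones classification case by case --- and your handling of the prime-degree cases and of $\mathrm{Sym}(n)$ matches the paper's (Sylow's theorem, respectively a single conjugacy class). The genuine gap is exactly where you stop: you never carry out the parity bookkeeping for $L=\mathrm{Alt}(n)$ with $n$ odd, and if you do carry it out, the step fails. Realise $\sigma$ as the translation $x\mapsto x+1$ of $\mathbb{Z}/n\mathbb{Z}$. The permutations conjugating $\sigma$ to $\sigma^k$ form the coset $\langle\sigma\rangle m_k$ with $m_k\colon x\mapsto kx$, and $\langle\sigma\rangle\le\mathrm{Alt}(n)$, so the generator $\sigma^k$ of $C$ lies in the $\mathrm{Alt}(n)$-class of $\sigma$ precisely when $m_k$ is even; by the Zolotarev--Frobenius lemma $\mathrm{sgn}(m_k)$ equals the Jacobi symbol $\left(\frac{k}{n}\right)$, which is identically $1$ exactly when $n$ is a perfect square. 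Concretely, for $n=9$ the map $m_2=(1\,2\,4\,8\,7\,5)(3\,6)$ is even and $2$ generates $(\mathbb{Z}/9\mathbb{Z})^{*}$, so all six generators of $C$ lie in a single $\mathrm{Alt}(9)$-class, whereas the $9$-cycles split into two $\mathrm{Alt}(9)$-classes of equal size because the $\mathrm{Sym}(9)$-centraliser of a $9$-cycle is the cyclic group it generates, which is contained in $\mathrm{Alt}(9)$. Hence $\sigma^{(0\,1)}$ is a $9$-cycle of $\mathrm{Alt}(9)$ that is conjugate to no element of $C$: the statement is false for $L=\mathrm{Alt}(9)$, and more generally for $\mathrm{Alt}(n)$ with $n$ an odd square. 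Your plan, honestly completed, refutes the proposition rather than proving it --- and the same defect is hidden in the paper's own proof, which disposes of the alternating case with the words ``direct inspection''.

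A secondary, repairable gap: in the linear case you assert that ``by the classification, the cyclic regular subgroups of $L$ are precisely these projective Singer subgroups''. The classification of the possible groups $L$ does not say this; that every cyclic regular subgroup of a group between $\mathrm{PSL}_d(q)$ and $\mathrm{P}\Gamma\mathrm{L}_d(q)$ already lies in $\mathrm{PGL}_d(q)$ and is the image of a Singer cycle is the substantive content of Jones's Corollary~2, which is exactly what the paper cites for this case. Either cite it or prove it; granting it, your determinant/norm argument does correctly upgrade $\mathrm{GL}_d(q)$-conjugacy of Singer cycles to $\mathrm{PSL}_d(q)$-conjugacy of their projective images.
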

\begin{proof}
We use~\cite{Jones} for the classification of the finite primitive permutation groups containing a cyclic regular subgroup. From this classification we deduce that one of the following holds
\begin{enumerate}
\item\label{item1}$L\le\mathrm{AGL}_1(p)$ with $n=p$ prime, or
\item\label{item2}$L=\mathrm{Sym}(4)$ with $n=4$, or
\item\label{item3}$L=\mathrm{Sym}(n)$ for some $n\ge 5$ or $L=\mathrm{Alt}(n)$ for some odd $n\ge 5$, or
\item\label{item4}$\mathrm{PSL}_d(q)\le L\le \mathrm{P}\Gamma\mathrm{L}_d(q)$, acting on $n=(q^d-1)/(q-1)$ points or hyperplanes,
\item\label{item5}$L=\mathrm{PSL}_2(11)$, $M_{11}$ or $M_{23}$ with $n=11$, $11$ or $23$ respectively.
\end{enumerate}

When $n$ is prime, the result follows from Sylow's theorem and hence the result holds in case~\eqref{item1} and~\eqref{item5}. The result follows with a direct inspection when $L$ is as in~\eqref{item2} or as in~\eqref{item3}. The result follows from~\cite[Corollary~2]{Jones} when $L$ is as in~\eqref{item4}.
\end{proof}
\begin{lemma}\label{l:1}
Let $T$ be a non-abelian simple group and let $\phi\in\mathrm{Aut}(T)$. Then the mapping $T\to T$ defined by $y\mapsto y^{-1}y^\phi$ is not bijective.
\end{lemma}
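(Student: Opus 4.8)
The plan is to argue by contradiction: suppose the map $f\colon T\to T$, $y\mapsto y^{-1}y^\phi$, is a bijection. The first observation is that $f(1)=1$, so bijectivity forces $y^{-1}y^\phi\ne 1$ for every $y\ne 1$; equivalently, $\phi$ is a fixed-point-free automorphism of $T$ (other than on the identity). A classical theorem — ultimately resting on the classification of finite simple groups, or for the solvable-quotient part on Thompson's theorem — states that a finite group admitting a fixed-point-free automorphism is solvable (indeed nilpotent if the automorphism has prime order). Since $T$ is non-abelian simple, hence non-solvable, this is already a contradiction. So the cleanest route is simply: bijectivity $\Rightarrow$ $\phi$ fixed-point-free $\Rightarrow$ $T$ solvable $\Rightarrow$ contradiction.

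If one wants to avoid invoking that heavy machinery, there is an elementary counting alternative. Consider the conjugacy-class structure: for $y\in T$, write $[y]$ for its $T$-conjugacy class. The map $f$ is constant on each right coset modulo nothing obvious, but one can instead count using the permutation character. A more hands-on approach: the number of solutions $y$ to $y^{-1}y^\phi = t$ summed over all $t$ in a fixed $\phi$-twisted conjugacy class, combined with the fact that $\phi$ normalizes $\langle\mathrm{Inn}(T)\rangle$, shows that $f$ cannot be injective unless certain class-size divisibility conditions hold that fail for simple groups. In practice, the fixed-point-free reduction is far shorter, so I would present that.

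The one subtlety worth spelling out is why $f$ bijective implies $\phi$ has no nontrivial fixed point. If $y^{-1}y^\phi = 1$, then $y^\phi = y$, i.e.\ $y\in \mathrm{Fix}(\phi)$; and $f(1)=1^{-1}1^\phi = 1$, so $1\in f^{-1}(1)$. If $f$ is injective, $f^{-1}(1)=\{1\}$, forcing $\mathrm{Fix}(\phi)=\{1\}$. Then $\phi$ acts fixed-point-freely on $T\setminus\{1\}$ and, extending $\langle\phi\rangle$ by $T$, the semidirect product $T\rtimes\langle\phi\rangle$ has $\langle\phi\rangle$ acting with trivial centralizer $C_T(\phi)=1$; by the (Feit–Thompson-dependent, CFSG-free) result that a finite group with a fixed-point-free automorphism is solvable, $T$ is solvable, contradicting that $T$ is non-abelian simple.

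I expect the only real decision point, rather than obstacle, to be how much to cite: the statement is genuinely easy once one recognizes it as the fixed-point-free automorphism phenomenon, so the ``hard part'' is purely expository — deciding whether to lean on the classification-type theorem (one line) or to insert a self-contained counting argument (several lines). I would go with the former and give a precise reference.
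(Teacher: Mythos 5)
Your proposal is correct and is essentially identical to the paper's proof: both observe that the preimage of the identity under $y\mapsto y^{-1}y^\phi$ is ${\bf C}_T(\phi)$, so bijectivity would force $\phi$ to be fixed-point-free, contradicting the fact that a non-abelian simple group admits no fixed-point-free automorphism (the paper cites Rowley for this). One small caveat: your parenthetical claim that the solvability of groups with a fixed-point-free automorphism is CFSG-free holds only for automorphisms of prime order (Thompson); the general statement, which is what is needed here, does rest on the classification.
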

\begin{proof}
The preimage, under the mapping $y\mapsto y^{-1}y^\phi$, of the identity of $T$ is ${\bf C}_T(\phi)$. Since $T$ is a non-abelian simple group, we have ${\bf C}_T(\phi)\ne 1$, because a non-abelian simple group does not admit a fixed-point-free automorphism by~\cite{Rowley}.
\end{proof}

\begin{lemma}\label{l:new}Let $T$ be a non-abelian simple group, let $p$ be a prime number with $\gcd(p,|T|)=1$ and let $\phi\in\mathrm{Aut}(T)$ be an element having $p$-power order. If ${\bf C}_T(\phi)={\bf C}_T(\phi^p)$, then $\phi=1$.
\end{lemma}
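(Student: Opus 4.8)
The plan is to exploit the fact that $\phi$ has order coprime to $|T|$, which makes the Brauer--Feit type fixed-point theory very clean. First I would set $P=\langle\phi\rangle$, a cyclic $p$-group acting on $T$, and consider the coprime action of $P$ on $T$. Suppose $\phi\neq 1$; since $P$ is a $p$-group, the unique subgroup of index $p$ in $P$ is $\langle\phi^p\rangle$, so the hypothesis ${\bf C}_T(\phi)={\bf C}_T(\phi^p)$ says that $\phi$ and its ``$p$-th power descendant'' fix exactly the same points of $T$. The goal is to derive a contradiction, i.e.\ to show this forces $\phi$ to be trivial.

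The key tool I would use is the standard fact from coprime action: if a $p$-group $P$ acts coprimely on $T$ and $Q\le P$, then ${\bf C}_T(Q)$ is acted on by $P/Q\,$-\,equivariantly nicely; more precisely, for coprime action one has ${\bf C}_{{\bf C}_T(Q)}(P)={\bf C}_T(P)$ and, crucially, if $x\in P$ acts trivially on ${\bf C}_T(\langle x^p\rangle)$ then one can often run an induction. Concretely: let $C={\bf C}_T(\phi^p)$. Since $\langle\phi^p\rangle$ is normal (indeed central) in $P$, the subgroup $C$ is $\phi$-invariant, and $\phi$ induces an automorphism $\bar\phi$ of $C$ of $p$-power order with $\gcd(|\bar\phi|,|C|)=1$ (as $|C|$ divides $|T|$). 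The hypothesis ${\bf C}_T(\phi)={\bf C}_T(\phi^p)=C$ says precisely that $\bar\phi$ acts \emph{trivially} on $C$, i.e.\ $C\le{\bf C}_T(\phi)$, which combined with ${\bf C}_T(\phi)\le{\bf C}_T(\phi^p)=C$ (automatic since $\langle\phi\rangle\supseteq\langle\phi^p\rangle$... wait, the containment goes the other way) — here I would be careful: $\langle\phi^p\rangle\le\langle\phi\rangle$ gives ${\bf C}_T(\phi)\le{\bf C}_T(\phi^p)$, so the hypothesis is the reverse inclusion ${\bf C}_T(\phi^p)\le{\bf C}_T(\phi)$, hence $\phi$ centralizes ${\bf C}_T(\phi^p)$.

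Now I would push the induction/descent. Having shown $\phi$ centralizes $C={\bf C}_T(\phi^p)$, I would like to conclude $\phi$ centralizes all of $T$, which by simplicity and the absence of fixed-point-free automorphisms (Lemma~\ref{l:1}'s underlying input, \cite{Rowley}) — actually by faithfulness of $\mathrm{Aut}(T)$ on $T$ — gives $\phi=1$. The descent is: apply the same argument to $\phi^p$ in place of $\phi$. By downward induction on $|\langle\phi\rangle|$, we may assume ${\bf C}_T(\phi^p)={\bf C}_T(\phi^{p^2})$ would force $\phi^p=1$; but to invoke this I need the hypothesis to propagate, namely ${\bf C}_T(\phi^p)={\bf C}_T(\phi^{p^2})$. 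This is where the real content sits. I would argue: by the coprime fixed-point formula (Glauberman/Thompson order formula, or simply that for coprime $P$-action $T=\langle {\bf C}_T(P_i)\mid P_i \text{ maximal in } P\rangle$ when $P$ is non-cyclic — but $P$ here \emph{is} cyclic, so that fails), so instead I rely on: for a cyclic $p$-group $P=\langle\phi\rangle$ acting coprimely, the chain ${\bf C}_T(\phi)\le{\bf C}_T(\phi^p)\le{\bf C}_T(\phi^{p^2})\le\cdots\le T$ and the fact that if two consecutive terms are equal then \emph{all} subsequent terms are equal — this follows because ${\bf C}_T(\phi^{p^{i}})$ is exactly the fixed points of the generator of $P/\langle\phi^{p^{i}}\rangle$ acting on... hmm.

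Let me restructure: the cleanest route is to show directly that ${\bf C}_T(\phi^p)={\bf C}_T(\phi^{p^2})=\cdots={\bf C}_T(1)=T$ by the following observation. Set $C_i={\bf C}_T(\phi^{p^i})$. Each $C_i$ is $\phi$-invariant, and $\phi$ acts on $C_{i+1}/$ — no, $C_i\le C_{i+1}$, and $\phi$ acts on $C_{i+1}$ with fixed point set $C_1$ (since $C_1={\bf C}_T(\phi^p)\supseteq{\bf C}_T(\phi)$, and on $C_{i+1}$ the element $\phi^p$ acts trivially, so $\phi$ acts on $C_{i+1}$ through $\phi$ mod $\phi^p$, an automorphism of order dividing $p$). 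Given ${\bf C}_{C_{i+1}}(\phi)=C_1=C_{i+1}$ by hypothesis (for $i=1$: $C_1={\bf C}_T(\phi^p)$ and the hypothesis says this equals ${\bf C}_T(\phi)={\bf C}_{C_1}(\phi)$, but I want ${\bf C}_{C_2}(\phi)$), this is getting delicate. The honest summary: the \textbf{main obstacle} is precisely organising this descent so that the hypothesis ``${\bf C}_T(\phi)={\bf C}_T(\phi^p)$'' can be iterated up the lattice of subgroups of $\langle\phi\rangle$; once one knows ${\bf C}_T(\phi)={\bf C}_T(\phi^{p^i})$ for \emph{all} $i$, taking $i$ large gives ${\bf C}_T(\phi)={\bf C}_T(1)=T$, whence $\phi=1$ since $\mathrm{Aut}(T)$ acts faithfully. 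I expect the iteration step to use: $\phi^p$ acts coprimely on $C_2={\bf C}_T(\phi^{p^2})$, trivially on $C_1={\bf C}_T(\phi^p)$, and the pair $(C_2,\phi|_{C_2})$ of coprime type has the property that $\phi$ and $\phi^p$ have equal fixed points on $C_2$ by hypothesis restricted appropriately — then a clean lemma on coprime actions (e.g.\ \cite[Ch.~5]{GorensteinFiniteGroups}-style, or an explicit short argument that a coprime automorphism of $p$-power order acting trivially on the fixed points of its $p$-th power is trivial, proved by induction on group order) finishes it. That explicit short lemma — a coprime $p$-element $\psi$ of $\mathrm{Aut}(X)$ with ${\bf C}_X(\psi)={\bf C}_X(\psi^p)$ implies $\psi=1$, for \emph{arbitrary} finite $X$ (not just simple) — is what I would isolate and prove first by induction on $|X|$, and then apply with $X=T$; its proof in turn reduces modulo ${\bf C}_X(\psi^p)$ — wait, that's not normal in $X$. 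So I would instead reduce to $X$ characteristically simple or use the fact that ${\bf C}_X(\psi^p)$ being $\psi$-invariant lets $\psi$ act on it trivially by hypothesis, and then one climbs: if $\psi^p\neq 1$ then by induction (smaller order) ${\bf C}_X(\psi^p)\subsetneq{\bf C}_X(\psi^{p^2})$, yet both contain ${\bf C}_X(\psi)$ and $\psi$ centralizes the smaller one — one then shows $\psi$ must centralize the larger one too, contradiction, and if $\psi^p=1$ the original hypothesis reads ${\bf C}_X(\psi)=X$, i.e.\ $\psi=1$. Filling the ``$\psi$ centralizes the larger one too'' gap is the crux and uses the coprime action fact that $[X,\psi]\cap{\bf C}_X(\psi)=1$ together with $[{\bf C}_X(\psi^{p^2}),\psi]\le{\bf C}_X(\psi^{p})$ — a commutator-collection computation in the coprime setting.
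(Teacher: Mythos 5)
There is a genuine gap, and it is fatal to the proposed strategy. The ``explicit short lemma'' you isolate as the engine of the argument --- that for an \emph{arbitrary} finite group $X$ and an automorphism $\psi$ of $p$-power order with $\gcd(p,|X|)=1$, the equality ${\bf C}_X(\psi)={\bf C}_X(\psi^p)$ forces $\psi=1$ --- is false. Take $p=3$, $X=(\mathbb{Z}/7\mathbb{Z})^3$, and let $\psi\in\mathrm{GL}_3(7)=\mathrm{Aut}(X)$ be an element of order $9$ inside a Singer cycle (such elements exist since $9$ divides $7^3-1=342$). Identifying $X$ with $\mathbb{F}_{343}$, $\psi$ acts as multiplication by a primitive $9$th root of unity $\zeta$, and $\psi^3$ as multiplication by $\zeta^3\neq 1$; hence ${\bf C}_X(\psi)={\bf C}_X(\psi^3)=1$ while $\psi\neq 1$. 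This also shows that no purely ``coprime action'' descent can work: the hypothesis ${\bf C}_X(\psi)={\bf C}_X(\psi^p)$ simply does not propagate to ${\bf C}_X(\psi^p)={\bf C}_X(\psi^{p^2})$ in general (in the example the latter equality fails: ${\bf C}_X(\psi^9)=X$). Some of the auxiliary facts you lean on are also not available: $[X,\psi]\cap{\bf C}_X(\psi)=1$ holds for coprime action on \emph{abelian} $X$ but not in general, and $[{\bf C}_X(\psi^{p^2}),\psi]\le{\bf C}_X(\psi^p)$ is not a valid commutator identity.

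The lemma is therefore not a formal consequence of coprimality plus the nonexistence of fixed-point-free automorphisms of simple groups; it genuinely needs the classification. The paper's proof runs as follows: since $p$ divides $|\mathrm{Aut}(T)|$ but not $|T|$, the group $T$ must be a simple group of Lie type and every $\phi\in\mathrm{Aut}(T)$ of $p$-power order is conjugate to a field automorphism (see~\cite[Table~5]{atlas}). Writing $T={}^dL_n(q^f)$ with $f$ a multiple of $p^\kappa$, where $p^\kappa$ is the order of $\phi$, one has ${\bf C}_T(\phi)={}^dL_n(q^{f/p^\kappa})$ and ${\bf C}_T(\phi^p)={}^dL_n(q^{f/p^{\kappa-1}})$, which are distinct subfield subgroups; this contradicts the hypothesis unless $\phi=1$. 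If you want to keep an elementary flavour, the irreducible first step is still the CFSG-dependent reduction to field automorphisms; after that the comparison of fixed subgroups is immediate.
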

\begin{proof}
It is known that, if there exists a prime divisor $p$ of $| \mathrm{Aut}( T )|$ such that $p$ does not divide $|T |$, then $T$ is
a simple group of Lie type and every $\phi \in\mathrm{Aut}( T )$ of $p$-power order is conjugate to a field automorphism of $T$, see~\cite[Table~5, page~xvii]{atlas}. 

Arguing by contradiction, suppose that $\phi\ne 1$ and let $p^\kappa$ be the order of $\phi$.
So, $T$ is isomorphic to ${}^d L_n (q^f )$, where ${}^dL_n (q^f )$ denotes the group of Lie type $L$ of rank $n$ (untwisted if
$d = 1$, twisted if $d = 2$, and ${}^3 D_4 (q^f )$ if $d = 3$), $q$ being a suitable prime, and $f$ a multiple of $p^\kappa$.

Now, ${\bf C}_T(\phi)={}^dL_n(p^{f/p^\kappa})$ and ${\bf C}_T(\phi^p)={}^dL_n(p^{f/p^{\kappa-1}})$. Therefore, ${\bf C}_T(\phi)<{\bf C}_T(\phi^p)$.
\end{proof}

\begin{proof}[Proof of Theorem~$\ref{thrm:1}$]
Now, let $G$ be as in Section~\ref{sec:diagonal type}, that is, $G\le W$ and $G$ is a primitive group of diagonal type. We suppose that $\gamma(G)=2$ and $\gamma(G/X)>2$ for every non-identity proper normal subgroup $X$ of $G$. 

Since $\gamma(G)=2$, there exists two proper subgroups $H$ and $K$ of $G$ with
\begin{align}\label{gamma=2}
G=\bigcup_{g\in G}H^g\cup\bigcup_{g\in G}K^g.
\end{align}
Replacing $H$ and $K$ if necessary, we may suppose that $H$ and $K$ are maximal subgroups of $G$. From~\cite[Theorem~5 and Section~3]{GL}, replacing $H$ with $K$ if necessary, we suppose that $N\le H$ and $K=G_{\omega_o}$. In particular, 
$$T=\mathrm{Inn}(T)\unlhd K=G_{\omega_o}\le W_{\omega_o}=\mathrm{Aut}(T)\times\mathrm{Sym}(\ell+1).$$

We let $L=\pi_2(G)$ be the image of $G$ under $\pi_2$, that is, $L$ is the primitive permutation group induced by $G$ by its action by conjugation on the $\ell+1$ simple direct factors of $N$. Thus $L=\pi_2(G)=\pi_2(K)$, because $G=NK$. 

We claim
\begin{equation}\label{l:2}\mathrm{Ker}(\pi_2)\cap G\subseteq \bigcup_{x\in G}H^x.
\end{equation}
We argue by contradiction and we suppose that there exists $$g\in (\mathrm{Ker}(\pi_2)\cap G)\setminus \bigcup_{x\in G}H^x.$$ In particular, $g=n\phi$, for some $n\in N$ and some $\phi\in\mathrm{Aut}(T)$. Since $N\le H\cap\mathrm{Ker}(\pi_2)$, replacing $g$ with $\phi$ if necessary, we may suppose that $g=\phi$. 

For each $t\in T$, let $$g_t=(t,\underbrace{1,\ldots,1}_{\ell\textrm{ times}})\phi\in G.$$ From~\eqref{gamma=2}, $g_t$ has a $G$-conjugate in $H$ or a $G$-conjugate in $K$. If $g_t$ has a $G$-conjugate in $H$, then so does $g$, contradicting our assumption on $g$. Therefore, $g_t$ has a $G$-conjugate in $K$. Since $G=NK$, we deduce that $g_t$ has an $N$-conjugate in $K$. Therefore, there exists $(1,t_1,\ldots,t_\ell)\in N$ such that $g_t^{(1,t_1,\ldots,t_\ell)}\in K$. We have
\begin{align*}
g_t^{(1,t_1,\ldots,t_\ell)}&=
(t,t_1^{-1},\ldots,t_\ell^{-1})\phi(1,t_1,\ldots,t_\ell)=
(t,t_1^{-1},\ldots,t_\ell^{-1})(1,t_1^{\phi^{-1}},\ldots,t_\ell^{\phi^{-1}})\phi\\
&=(t,t_1^{-1}t_1^{\phi^{-1}},\ldots,t_\ell^{-1}t_\ell^{\phi^{-1}})\phi.
\end{align*} 
Now, if this element lies in $K$, then $(t,t_1^{-1}t_1^{\phi^{-1}},\ldots,t_\ell^{-1}t_\ell^{\phi^{-1}})\in D$. In particular, $t=t_1^{-1}t_1^{\phi^{-1}}$. Since this whole argument does not depend on $t\in T$, we have shown that the mapping $T\to T$ defined by $y\mapsto y^{-1}y^{\phi^{-1}}$ is bijective. However, this contradicts Lemma~\ref{l:1}. This contradiction has established the veracity of~\eqref{l:2}.

We claim that
\begin{equation}\label{l:22}\mathrm{Ker}(\pi_2)\cap G\leq H.
\end{equation}
We argue by contradiction and we suppose that $\mathrm{Ker}(\pi_2)\cap G\nleq H$. As $H$ is maximal in $G$ and as $\mathrm{Ker}(\pi_2)\cap G\unlhd G$, we get $G=(\mathrm{Ker}(\pi_2)\cap G)H.$
Now, by intersecing both sides of~\eqref{l:2} with $\mathrm{Ker}(\pi_2)\cap G$, we deduce
\begin{align*}\mathrm{Ker}(\pi_2)\cap G&=\mathrm{Ker}(\pi_2)\cap G\cap \bigcup_{x\in G}H^x=\mathrm{Ker}(\pi_2)\cap G\cap\bigcup_{x\in \mathrm{Ker}(\pi_2)\cap G}H^x\\
&=\bigcup_{x\in \mathrm{Ker}(\pi_2)\cap G}(\mathrm{Ker}(\pi_2)\cap H)^x.
\end{align*}
As $\mathrm{Ker}(\pi_2)\cap G\nleq H$, we get that $\mathrm{Ker}(\pi_2)\cap H$ is a proper subgroup of $\mathrm{Ker}(\pi_2)\cap G$. This shows that each element of $\mathrm{Ker}(\pi_2)\cap G$ is $(\mathrm{Ker}(\pi_2)\cap G)$-conjugate to an element of its proper subgroup $\mathrm{Ker}(\pi_2)\cap H$. However, this contradicts Jordan's theorem. This contradiction has established the veracity of~\eqref{l:22}.

Let $A=\pi_2(H)$ be the image of $H$ under $\pi_2$. Since $H<G$ and $\mathrm{Ker}(\pi_2)\cap G\le H$ by~\eqref{l:22}, we deduce 
\begin{equation}\label{cl:1}
A< L.
\end{equation}

Now, let 
\begin{equation}\label{eq:2}\sigma\in L\setminus\bigcup_{\tau\in L}A^\tau.\end{equation}
Since $L=\pi_2(G)=\pi_2(K)$, there exists $k\in K$ with $\sigma=\pi_2(k)$. As $K\le\mathrm{Aut}(T)\times L$, there exists $\phi\in\mathrm{Aut}(T)$ with $k=\phi\sigma\in K$. 

For each $n\in N$, $n\phi\sigma\in G$ and since $\pi_2(n\phi\sigma)=\sigma$ is not $L$-conjugate to an element of $A$, we deduce that $n\phi\sigma$ is not $G$-conjugate to an element of $H$. Therefore, from~\eqref{gamma=2}, $n\phi\sigma$ is $G$-conjugate to an element of $K$. Moreover, as $G=NK$, we deduce that there exists $n'\in N$ with $(n\phi \sigma)^{n'}\in K$. As we have mentioned above, we may choose $n'$ such that its first coordinate is the identity of $T$ and hence $n'=(1,t_1,\ldots,t_\ell)$, for some $t_1,\ldots,t_\ell\in T$. In what follows, we apply this argument to various choices of $n$.

As $\sigma\ne 1$, when written as a product of cycles having disjoint supports, $\sigma$ has a cycle of length $a\ge 2$. Relabeling the indexed set $\{0,\ldots,\ell\}$ if necessary, we may suppose that $(0\,1\ldots\,a-1)$ is a cycle of $\sigma$ of length $a$.

Let $t\in T$, let $n=(t,1,\ldots,1)$ and let $n'=(1,t_1,\ldots,t_\ell)\in N$ with $(n\phi\sigma)^{n'}\in K$. We have
\begin{align}\label{eq:22}
(n\phi \sigma)^{n'}&=(t,t_1^{-1},\ldots,t_{a-2}^{-1},t_{a-1}^{-1},\ldots)\phi\sigma n'\\\nonumber
&=
(t,t_1^{-1},\ldots,t_{a-2}^{-1},t_{a-1}^{-1},\ldots)\phi(t_1,t_2,\ldots,t_{a-1},1,\ldots)\sigma\\\nonumber
&=(tt_1^{\phi^{-1}},t_1^{-1}t_2^{\phi^{-1}},\ldots,t_{a-2}^{-1}t_{a-1}^{\phi^{-1}},t_{a-1}^{-1},\ldots)\phi\sigma.\nonumber
\end{align}
If this element belongs to $K$, then $$tt_1^{\phi^{-1}}=t_1^{-1}t_2^{\phi^{-1}}=\cdots=t_{a-2}^{-1}t_{a-1}^{\phi^{-1}}=t_{a-1}^{-1}.$$
This implies
\begin{align}\label{eq:13}\nonumber
t_{a-1-i}&=t_{a-1}^{\phi^{-i}}t_{a-1}^{\phi^{-(i-1)}}\cdots t_{a-1}^{\phi^{-1}}t_{a-1}, \quad\forall i\in \{1,\ldots,a-2\},\\
t&=t_{a-1}^{-1}(t_{a-1}^{-1})^{\phi^{-1}}\cdots (t_{a-1}^{-1})^{\phi^{-(a-1)}}=(t_{a-1}^{-1}\phi)^{a}\phi^{-a}.
\end{align}

Summing up, so far, we have shown the following fact.
\begin{lemma}\label{l:13}Let $\sigma\in L\setminus\bigcup_{\tau\in L}A^\tau$ and let $\phi\in\mathrm{Aut}(T)$ with $\phi \sigma\in K$. If $\sigma$ has a cycle of length $a\ge 2$, then  the mapping $y\mapsto (y\phi)^a\phi^{-a}$ is a bijection.
\end{lemma}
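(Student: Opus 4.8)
The plan is to read off the statement from the conjugation computation already performed just above; the only genuinely new observation is that a surjective self-map of the finite set $T$ must be a bijection. Fix an arbitrary $t\in T$ and put $g=(t,1,\ldots,1)\phi\sigma\in G$. Since $\pi_2$ is a homomorphism, it carries $G$-conjugacy to $L$-conjugacy; as $\pi_2(g)=\sigma$ lies outside $\bigcup_{\tau\in L}A^\tau$ with $A=\pi_2(H)$, no $G$-conjugate of $g$ lies in $H$. Hence, by~\eqref{gamma=2}, some $G$-conjugate of $g$ lies in $K$, and writing the conjugating element as $nk$ with $n\in N$ and $k\in K$ (possible because $G=NK$) we obtain $g^{n}\in K$; multiplying $n$ on the right by the appropriate element of $D\le K$ we may take $n=n'=(1,t_1,\ldots,t_\ell)$.

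Next I would unwind the condition $g^{n'}\in K$. Since $K\le W_{\omega_o}=\mathrm{Aut}(T)\times\mathrm{Sym}(\ell+1)$ and the only elements of $N$ fixing $\omega_o$ form the subgroup $D$, membership of $g^{n'}$ in $K$ forces the $N$-part of $g^{n'}$ to lie in $D$, that is, to have all coordinates equal. Substituting $n=(t,1,\ldots,1)$ into~\eqref{eq:22} and imposing this yields the chain $tt_1^{\phi^{-1}}=t_1^{-1}t_2^{\phi^{-1}}=\cdots=t_{a-2}^{-1}t_{a-1}^{\phi^{-1}}=t_{a-1}^{-1}$, which, solved exactly as in~\eqref{eq:13}, gives $t=(t_{a-1}^{-1}\phi)^{a}\phi^{-a}$. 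Thus every $t\in T$ has the form $(y\phi)^{a}\phi^{-a}$ for some $y\in T$; equivalently, the self-map $y\mapsto(y\phi)^{a}\phi^{-a}$ of $T$ is onto, and since $T$ is finite it is a bijection. (The map does take values in $T$: modulo $\mathrm{Inn}(T)$ one has $y\phi\equiv\phi$, so $(y\phi)^{a}\phi^{-a}\equiv 1$, i.e. the value is inner.)

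The argument is essentially bookkeeping, and I expect the only delicate point to be keeping the conjugation conventions straight in~\eqref{eq:22} — in particular the appearance of $\phi^{-1}$ rather than $\phi$ acting on the coordinates $t_i$ — so that the solved form comes out as $t=(t_{a-1}^{-1}\phi)^{a}\phi^{-a}$ and not some variant of it. Unlike Lemmas~\ref{l:1} and~\ref{l:new}, no structural input on automorphisms of simple groups is required here: once surjectivity is in hand, finiteness of $T$ supplies the bijectivity.
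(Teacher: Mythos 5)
Your proof is correct and follows essentially the same route as the paper: the paper's own two-line proof of Lemma~\ref{l:13} simply cites the computation in~\eqref{eq:22}--\eqref{eq:13} (conjugating $(t,1,\ldots,1)\phi\sigma$ into $K$ by a normalized element of $N$ and reading off $t=(t_{a-1}^{-1}\phi)^a\phi^{-a}$), which you have re-derived in full before invoking finiteness of $T$ to pass from surjectivity to bijectivity. Your additional checks (reduction from a $G$-conjugate to an $N$-conjugate via $G=NK$, normalization of the first coordinate using $D\le K$, and the sanity check that the map lands in $T$) are all accurate and consistent with the paper.
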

\begin{proof}
From~\eqref{eq:13}, for each $t\in T$, there exists $t_{a-1}\in T$ such that $t=(t_{a-1}^{-1}\phi)^a\phi^{-a}$. Therefore, the mapping $T\to T$ defined by $y\mapsto (y\phi)^a\phi^{-a}$ is surjective and hence bijective.
\end{proof}

Suppose now that $\sigma$ fixes some point of $\{0,\ldots,\ell\}$. As above, relabeling the indexed set if necessary, we may suppose that $\sigma$ fixes the point $a$. Using this information in~\eqref{eq:22}, we get
$$(n\phi \sigma)^{n'}=(tt_1^{\phi^{-1}},t_1^{-1}t_2^{\phi^{-1}},\ldots,t_{a-2}^{-1}t_{a-1}^{\phi^{-1}},t_{a-1}^{-1},t_a^{-1}t_a^{\phi^{-1}},\ldots)\phi\sigma$$
If this element belongs to $K$, then the elements appearing in the $a^{\mathrm{th}}$ and in the $(a+1)^{\mathrm{th}}$ coordinates are equal, that is, 
$t_{a-1}^{-1}=t_{a}^{-1}t_a^{\phi^{-1}}.$ Substituting this value of $t_{a-1}^{-1}$ in~\eqref{eq:13}, we get
\begin{equation*}
t=(t_a^{-1}t_a^{\phi^{-1}}\phi)^{a-1}\phi^{-(a-1)}=(t_a^{-1}\phi t_a)^{a}\phi^{-a}=t_a^{-1}\phi^{a}t_a\phi^{-a}=t_a^{-1}t_a^{\phi^{-a}}.
\end{equation*}
Since this holds for every $t\in T$, this implies that the mapping $T\to T$ defined by $y\mapsto y^{-1}y^{\phi^a}$ is bijective. However, this contradicts Lemma~\ref{l:1}. This contradiction has shown that, each element of $L$ fixing some point is $L$-conjugate to an element of $A$, that is, the elements in $L\setminus\bigcup_{\tau\in L}A^\tau$ act fixed-point-freely on $\{0,\ldots,\ell\}$.

Now, suppose that our element $\sigma$ that we have chosen in~\eqref{eq:2} has at least two cycles of length at least $2$. As usual relabeling the index set $\{0,\ldots,\ell\}$ we may suppose that $(0\,1\,\ldots\,a-1)$ and $(a\,a+1\ldots a+b-1)$ are two cycles of $\sigma$, having lengths $a\ge 2$ and $b\ge 2$ respectively.  Using this information in~\eqref{eq:22}, we get
\begin{align*}
(n\phi \sigma)^{n'}&=(tt_1^{\phi^{-1}},t_1^{-1}t_2^{\phi^{-1}},\ldots,t_{a-2}^{-1}t_{a-1}^{\phi^{-1}},t_{a-1}^{-1},t_a^{-1}t_{a+1}^{\phi^{-1}},t_{a+1}^{-1}t_{a+2}^{\phi^{-1}},\ldots,t_{a+b-1}^{-1}t_a^{\phi^{-1}},\ldots)\phi\sigma.
\end{align*}
If this element belongs to $K$, then 
$$t_{a-1}^{-1}=t_{a+b-1}^{-1}t_{a}^{\phi^{-1}}\hbox{ and }t_a^{-1}t_{a+1}^{\phi^{-1}}=t_{a+1}^{-1}t_{a+2}^{\phi^{-1}}=\cdots=t_{a+b-1}^{-1}t_a^{\phi^{-1}}.$$
This implies
\begin{align}\label{eq:saturday}
t_{a+b-1-i}&=(t_{a+b-1}t_a^{-1})^{\phi^{-i}}(t_{a+b-1}t_a^{-1})^{\phi^{-(i-1)}}\cdots (t_{a+b-1}t_a^{-1})^{\phi^{-1}}(t_{a+b-1}t_a^{-1})t_a,
\end{align} for every $i\in \{1,\ldots,b-2\}$, and
\begin{align*}
1&=(t_at_{a+b-1}^{-1})(t_{a}t_{a+b-1}^{-1})^{\phi^{-1}}\cdots (t_at_{a+b-1}^{-1})^{\phi^{-(b-1)}}=(t_{a}t_{a+b-1}^{-1}\phi)^{b}\phi^{-b}.
\end{align*}
From Lemma~\ref{l:13}, the mapping $T\to T$ defined by $y\to (y\phi)^b\phi^{-b}$ is bijective. Since $1\mapsto \phi^{b}\phi^{-b}=1$ and $t_at_{a+b-1}^{-1}\mapsto (t_{a}t_{a+b-1}^{-1}\phi)^{b}\phi^{-b}=1$, we deduce $t_at_{a+b-1}^{-1}=1$, that is, $t_{a+b-1}=t_a$.

Moreover, from~\eqref{eq:saturday}, we have $t_a^{-1}=t_{a+b-1}^{-1}t_a^{\phi^{-1}}=t_a^{-1}t_a^{\phi^{-1}}$ and hence, from~\eqref{eq:13}, we get
\begin{align*}
t&=(t_{a-1}^{-1}\phi)^a\phi^{-a}=(t_{a}^{-1}t_a^{\phi^{-1}}\phi)^a\phi^{-a}=(t_{a}^{-1}\phi t_a)^a\phi^{-a}=t_a^{-1}\phi^at_a\phi^{-a}=t_a^{-1}t_a^{\phi^a}.
\end{align*}
Since this holds for every $t\in T$, this implies that the mapping $T\to T$ defined by $y\mapsto y^{-1}y^{\phi^a}$ is bijective. However, this contradicts Lemma~\ref{l:1}. 

Summing up, so far, we have shown the following result.
\begin{lemma}\label{l:14}
Each element of $L$ having at least two cycles (when written as the product of cycles having disjoint support) is $L$-conjugate to an element of $A$, that is, the elements in $L\setminus\bigcup_{\tau\in L}A^\tau$ consists of a unique cycle of length $\ell+1$.
\end{lemma}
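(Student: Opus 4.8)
The plan is to obtain Lemma~\ref{l:14} simply by collating the two facts established in the preceding two paragraphs, so essentially no new work is required. Recall first that, just before this lemma, we argued that if some $\sigma\in L\setminus\bigcup_{\tau\in L}A^\tau$ fixed a point of $\{0,\ldots,\ell\}$, then substituting the resulting coordinate relation into~\eqref{eq:13} would make the map $y\mapsto y^{-1}y^{\phi^a}$ bijective, contradicting Lemma~\ref{l:1}; hence every element of $L\setminus\bigcup_{\tau\in L}A^\tau$ acts without fixed points, i.e.\ has no cycle of length $1$ in its disjoint-cycle decomposition. Recall next that we likewise showed that if such a $\sigma$ had two disjoint cycles of lengths $a\ge 2$ and $b\ge 2$, then Lemma~\ref{l:13} applied to the $b$-cycle forces $t_{a+b-1}=t_a$ and then $t_a^{-1}=t_a^{-1}t_a^{\phi^{-1}}$, which fed into~\eqref{eq:13} again makes $y\mapsto y^{-1}y^{\phi^a}$ bijective, contradicting Lemma~\ref{l:1}; hence every element of $L\setminus\bigcup_{\tau\in L}A^\tau$ has at most one cycle of length $\ge 2$.

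Now let $\sigma\in L\setminus\bigcup_{\tau\in L}A^\tau$ and write $\sigma$ as a product of disjoint cycles. By the first fact none of these cycles has length $1$, so every cycle has length $\ge 2$; by the second fact there is at most one cycle of length $\ge 2$. Hence $\sigma$ consists of a single cycle, and since $\sigma$ has no fixed point this cycle has length $\ell+1$. Taking the contrapositive, any $\sigma\in L$ whose disjoint-cycle decomposition has at least two cycles must lie in $\bigcup_{\tau\in L}A^\tau$, i.e.\ it is $L$-conjugate to an element of $A$; this is precisely the assertion of Lemma~\ref{l:14}.

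There is no real obstacle: the content is entirely in Lemmas~\ref{l:1} and~\ref{l:13} and in the two reductions carried out above. The only point to get right is the translation between the permutation-theoretic language (``fixed-point-free'', ``at least two cycles'') and the cycle-type bookkeeping (``no $1$-cycle'', ``at most one cycle of length $\ge 2$''), from which the conclusion that the uncovered elements are exactly the full $(\ell+1)$-cycles is immediate.
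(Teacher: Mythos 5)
Your proposal is correct and is exactly the paper's argument: the paper introduces Lemma~\ref{l:14} with ``Summing up, so far, we have shown the following result,'' meaning its proof is precisely the collation of the two preceding reductions (no fixed points, at most one cycle of length $\ge 2$) that you carry out. Nothing is missing.
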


From~\eqref{cl:1} and from Jordan's theorem, $L\setminus\bigcup_{\tau\in L}A^\tau\ne\emptyset$ and hence $L$ contains a permutation $\sigma$ which is a cycle of length $\ell+1$. Let $C=\langle\sigma\rangle$. From Lemma~\ref{l:14} and Proposition~\ref{p:1}, we deduce that
$$L=\bigcup_{\tau\in L}A^\tau\cup\bigcup_{\tau\in L}C^\tau.$$
Since $L$ is an epimorphic image of $G$ and since $\gamma(G/X)>2$ for every non-identity proper subgroup of $G$, we deduce $\gamma(L)>2$ and hence $A,C$ are not the components of a normal $2$-covering of $L$. As $A<L$ from~\eqref{cl:1}, we get $L=C$ is cyclic. Since $L$ acts primitively on $\{0,\ldots,\ell\}$, we get that $\ell+1$ is a prime number.

Summing up, $\ell+1$ is prime and $L$ is cyclic of order $\ell+1$, generated by $\sigma$. Without loss of generality, we may suppose that $$\sigma=(0\,1\,\ldots\,\ell).$$
Now, let $\phi\in  \mathrm{Aut}(T)$ such that $\phi\sigma\in K$. Replacing $\phi\sigma$ by a suitable power if necessary, we may suppose that $\phi$ has order a power of $\ell+1$. Suppose that $\ell+1$ divides $|T|$. Now, let $P$ be a Sylow $(\ell+1)$-subgroup of $\mathrm{Aut}(T)$ containing $\phi$ and let $Q=T\cap P$. Observe that $Q$ is a Sylow $(\ell+1)$-subgroup of $T$ and $Q\unlhd P$. As $\ell+1$ divides the cardinality of $T$, we get $Q\ne 1$ and hence ${\bf Z}(P)\cap Q\ne 1$. Let $z\in {\bf Z}(P)\cap Q$ having order $\ell+1$. From Lemma~\ref{l:13}, the mapping $y\mapsto (y\phi)^{\ell+1}\phi^{-(\ell+1)}$ is a bijection. Under this function, 
the element $z$ is mapped to $$(z^{-1}\phi)^{\ell+1}\phi^{-(\ell+1)}=z^{-(\ell+1)}\phi^{\ell+1}\phi^{-(\ell+1)}=1,$$
which is a contradiction because $z\ne 1$. Therefore,
\begin{align*}
\gcd(\ell+1,|T|)=1.
\end{align*}

Let $\phi\sigma\in K$ be an element having order a power of $\ell+1$, where $\phi\in\mathrm{Aut}(T)$. Now, for every $t\in T$, the element $(t,1,\ldots,1)\phi\sigma$ has a $G$-conjugate in $K$ and hence it has an $N$-conjugate in $K$, because $G=NK$. Therefore there exist $t_1,\ldots,t_\ell\in T$ such that by setting $n=(1,t_1,\ldots,t_\ell)$ we have
\begin{align*}
((t,1,\ldots,1)\phi\sigma)^n&=(tt_1^{\phi^{-1}},t_1^{-1}t_2^{\phi^{-1}},t_2^{-1}t_3^{\phi^{-1}},\ldots,t_{\ell-1}^{-1}t_\ell^{\phi^{-1}},t_\ell^{-1})\phi\sigma\in K.
\end{align*}
This implies
$$tt_1^{\phi^{-1}}=t_1^{-1}t_2^{\phi^{-1}}=t_2^{-1}t_3^{\phi^{-1}}=\cdots=t_{\ell-1}^{-1}t_\ell^{\phi^{-1}}=t_\ell^{-1}.$$
From this we deduce that, for each $i\in \{1,\ldots,\ell-1\}$,
$$t_i=t_\ell^{\phi^{-(\ell-i)}}t_\ell^{-(\ell-i-1)}\cdots t_\ell^{\phi^{-1}}t_\ell$$
and $$t=t_\ell^{-1}(t_\ell^{-1})^{\phi^{-1}}\cdots (t_\ell^{-1})^{\phi^{-\ell}}.$$
This implies that the mapping $\varphi:T\to T$ defined by $y\mapsto yy^{\phi^{-1}}\cdots y^{\phi^{-\ell}}$ is surjective, and hence bijective. Let $z\in T$ and set $y=z^{-1}z^{\phi^{-1}}$. We get
$$\varphi(y)=z^{-1}z^{\phi^{-1}}(z^{-1}z^{\phi^{-1}})^{\phi^{-1}}\cdots (z^{-1}z^{\phi^{-1}})^{\phi^{-\ell}}=z^{-1}z^{\phi^{-(\ell+1)}}.$$
This shows that, for every $z\in{\bf C}_T(\phi^{\ell+1})$, $\varphi$ maps $y=z^{-1}z^{\phi^{-1}}$ to $$\varphi(y)=z^{-1}z^{\phi^{-(\ell+1)}}=1.$$ As $\varphi$ is injective, we deduce that
 for every $z\in{\bf C}_T(\phi^{\ell+1})$, we have $z^{-1}z^{\phi^{-1}}=1$, that is, $z^{\phi}=z$. This yields ${\bf C}_T(\phi^{\ell+1})={\bf C}_T(\phi)$. From Lemma~\ref{l:new}, $\phi=1$. We have shown that the only non-identity element of $K$ having order a power of $\ell+1$ are the elements in $\langle \sigma\rangle$.

This shows that $$K=U\times \langle(0\,1\,\cdots\,\ell)\rangle,$$
where $T\le U\le\mathrm{Aut}(T)$ and $\gcd(|U|,\ell+1)=1$. 
\end{proof}
\thebibliography{10}
\bibitem{0}R.~A.~Bailey, P.~J.~Cameron, C.~E.~Praeger, C.~Sneider, The geometry of diagonal groups,
\textit{Trans. Amer. Math. Soc.} \textbf{375} (2022), no. 8, 5259--5311.
\bibitem{1}D.~Bubboloni, P.~Spiga, Th.~Weigel, \textit{Normal 2-coverings of the finite simple groups and
their generalizations}, \href{https://arxiv.org/abs/2208.08756}{ 	arXiv:2208.08756 }.
\bibitem{atlas} J.~H.~Conway, R.~T. Curtis, S.~P.~Norton, R.~A.~Parker, R.~A.~Wilson, An $\mathbb{ATLAS}$ of Finite Groups \textit{Clarendon Press, Oxford}, 1985; reprinted with corrections 2003.
\bibitem{6}J.~D.~Dixon, B.~Mortimer, \textit{Permutation groups}, Graduate Texts in Mathematics \textbf{163}, Springer-Verlag, New York, 1996.
\bibitem{GL}M.~Garonzi, A.~Lucchini, 
Covers and normal covers of finite groups, \textit{J. Algebra} \textbf{422} (2015), 148--165.
\bibitem{Jones}G.~Jones,
Cyclic regular subgroups of primitive permutation groups,
\textit{J. Group Theory} \textbf{5} (2002), 403--407.
\bibitem{Rowley}P.~Rowley, Finite groups admitting a fixed-point-free automorphism group, 
\textit{J. Algebra} \textbf{174} (1995), 724--727.
\end{document}